\newtheorem{theorem}{Theorem}[section]
\newtheorem{lemma}[theorem]{Lemma}
\newtheorem{corollary}[theorem]{Corollary}
\newtheorem{proposition}[theorem]{Proposition}
\newtheorem{remark}[theorem]{Remark}
\theoremstyle{definition}
\newtheorem*{example}{Example}
\newtheorem{claim}{Claim}
\newcommand{\Ric}{{\rm Ric}}
\begin{document}

\title[Classification of Expanding Gradient Yamabe Solitons]
{Triviality, Rotational Symmetry, and Classification of Complete Expanding Gradient Yamabe Solitons}

\author{Shun Maeta}
\address{Department of Mathematics, Chiba University, 1-33, Yayoicho, Inage, Chiba, 263-8522, Japan.}
\curraddr{}
\email{shun.maeta@faculty.gs.chiba-u.jp~{\em or}~shun.maeta@gmail.com}
\thanks{The author is partially supported by the Grant-in-Aid for Scientific Research (C), No.23K03107, Japan Society for the Promotion of Science.}
\subjclass[2010]{53C21, 53C25, 53C20}

\date{}

\dedicatory{}

\keywords{Expanding gradient Yamabe solitons; Yamabe flow; Scalar curvature}

\commby{}

\begin{abstract}
In this paper, we rigorously analyze the scalar curvature of complete expanding gradient Yamabe solitons. We completely classify nontrivial complete expanding gradient Yamabe solitons in both cases: when the scalar curvature is greater than the soliton constant and when it is less than the soliton constant.
\end{abstract}

\maketitle

\bibliographystyle{amsplain}


\section{Introduction}\label{intro}

The Yamabe flow is a fundamental and highly effective tool for analyzing the structure of manifolds.
The study of the Yamabe flow has developed rapidly in the last decades and has become a central area in geometry (cf. \cite{Brendle05}, \cite{Brendle07}, \cite{Chow92}, \cite{CLN06}, \cite{Hamilton89}, \cite{SS03}, \cite{Ye94}). 
To analyze the Yamabe flow, and more generally, geometric flows, it is important to study singularity models.
Yamabe solitons are special solutions to the Yamabe flow and are expected to serve as singularity models (cf. \cite{CLN06}).  From this perspective, Yamabe solitons have also been studied extensively over the last few decades  (cf. \cite{CSZ12}, \cite{CMM12}, \cite{DS13}, \cite{Maeta28}, \cite{Maeta29}).

An $n$-dimensional Riemannian manifold $(M^n,g)$ is called a gradient Yamabe soliton if there exists a smooth function $F$ on $M$ and a constant $\lambda\in \mathbb{R}$ such that 
\[
\nabla \nabla F=(R-\lambda)g,
\]
where $\nabla \nabla F$ is the Hessian of $F$ and $R$ denotes the scalar curvature of $(M,g)$.
A gradient Yamabe soliton is represented as $(M^n, g, F, \lambda)$.
If $F$ is constant, then the gradient Yamabe soliton is called trivial.
If $\lambda>0$, $\lambda=0$, or $\lambda<0$, then the Yamabe soliton is called shrinking, steady, or expanding.

It is well known that there exist no nontrivial compact gradient Yamabe solitons (cf. \cite{Hamilton89}). One of the main problems in the study of Yamabe solitons is determining whether complete gradient Yamabe solitons are rotationally symmetric. 
In particular, the Yamabe-soliton analogue of Perelman's conjecture: {\it Under what conditions do nontrivial complete gradient shrinking, steady, and expanding Yamabe solitons with positive (scalar) curvature or with bounded (scalar) curvature become rotationally symmetric? In particular, what natural geometric assumptions, beyond positive or bounded (scalar) curvature, are required?} Partial answers have been obtained under the assumption of local conformal flatness (cf. \cite{CSZ12}, \cite{DS13}).
Catino, Mantegazza and Mazzieri also provided a partial answer to the problem under the assumption that $\Ric(\nabla F,\nabla F)\geq0$ and $\Ric (\nabla F,\nabla F)>0$ holds at least at one point (cf. \cite{CMM12}). Here, $\Ric$ is the Ricci tensor of $M.$ 
C. He resolved the steady case (cf. \cite{He11}).
For the shrinking case, the author resolved the problem (cf. \cite{Maeta28}).
Therefore, the remaining problem involves expanding solitons.
However, the study of expanding solitons has been regarded as difficult so far. 
In fact, even in low-dimensional cases, expanding gradient Yamabe solitons exhibit richer geometry than steady or shrinking solitons (cf. \cite{BM15}, \cite{Maeta29}), making it difficult to obtain rigidity results.
To overcome this difficulty, we reduce the problem to the existence and nonexistence of solutions of a certain ODE. 
Cao, Sun, and Zhang showed the structure theorem for Yamabe solitons (see also \cite{Tashiro65} and \cite{CMM12}).

\begin{theorem}[\cite{Tashiro65}, \cite{CSZ12}, \cite{CMM12}]\label{CSZ}
Let $(M,g,F)$ be a nontrivial, complete, gradient Yamabe soliton. 
Then $(M,g)$ is a complete warped product manifold and must take one of the two forms:

\begin{enumerate}
\item[$(1)$]
a warped product manifold
$([0,+\infty),dr^2)\times_{{F'(r)}^2}(\mathbb{S}^{n-1},{\bar g}_{S})$,
where $\bar g_{S}$ is the round metric on $\mathbb{S}^{n-1},$ or
\item[$(2)$]
a warped product manifold
$(\mathbb{R},dr^2)\times_{{F'(r)}^2} \left(N^{n-1},\bar g\right),$
where the scalar curvature $\bar R$ of $N$ is constant. If the scalar curvature of $M$ is nonnegative, then $
\bar R>0$ or $R=\bar R=0.$ 
\end{enumerate}
\end{theorem}

\begin{remark}\label{depends r}
~
\begin{enumerate}

\item[$(1)$]
For a conformal soliton, that is, a Riemannian manifold $(M,g,F,\varphi)$ that satisfies the condition $\nabla\nabla F=\varphi g$, where $\varphi\in C^\infty(M)$, Tashiro \cite{Tashiro65} provided the structure theorem, and Catino, Mantegazza, and Mazzieri also provided the structure theorem in \cite{CMM12} $($see also \cite{Maeta21}$).$ 
Such manifolds were also studied by Cheeger and Colding \cite{CC96}.

\item[$(2)$] It is shown that $F$ depends only on $r$, and in Case (2) of Theorem \ref{CSZ}, without loss of generality, we can assume that $\rho(r) = F'(r) > 0$ on $\mathbb{R}$ (cf. \cite{CSZ12} see also \cite{Maeta21}).
\end{enumerate}
\end{remark}

By the proof of Theorem \ref{CSZ} (see also \cite{CMM12}, \cite{Maeta21}), it is easy to see that if $\Ric(\nabla F,\nabla F)\geq0$ and $\Ric (\nabla F,\nabla F)>0$ holds at least at one point, then any such complete Riemannian manifold $(M,g,F, \varphi)$ with a non-constant function $F$ is rotationally symmetric. Therefore, in the study of the rotational symmetry of complete expanding gradient Yamabe solitons, imposing conditions on the scalar curvature is of fundamental importance.
To analyze the scalar curvature, the following equations, derived from the structure of the warped product, are used.
\[
|\nabla F|^2R=\bar R-(n-1)(n-2)(R-\lambda)^2-2(n-1)g(\nabla F,\nabla R).
\]
Set $\rho=F'$. Then we have
\begin{equation}\label{key2}
2(n-1)\rho\rho''+(n-1)(n-2)\rho'^2+\rho^2(\rho'+\lambda)=\bar R.
\end{equation}
Hence, the scalar curvature $\bar R$ of $N$ is constant. Differentiating both sides of \eqref{key2}, 
\begin{equation}\label{key3}
2(n-1)^2\rho'\rho''+2(n-1)\rho\rho'''+2\rho\rho'(\rho'+\lambda)+\rho^2\rho''=0.
\end{equation}
Note that, since low dimensional complete gradient solitons have already been classified (cf. \cite{BM15, Maeta29}), we focus on the case $n = \dim M \ge 3$ in this paper.

\section{Examples and rotational symmetry}

To study rotational symmetry of complete expanding gradient Yamabe solitons, we first note that there exist nontrivial examples when $R=\lambda$ (cf. \cite{Maeta29}).

\begin{example}
Let $(N^{n-1},~\bar g)$ be an $(n-1)$-dimensional complete Riemannian manifold with constant negative scalar curvature $\bar R$. Then, for any $\alpha \in \mathbb{R}$, $(M,g,F,\lambda)=(\mathbb{R}\times N^{n-1}, dr^2+\frac{\bar R}{\lambda}\bar g, \sqrt{\frac{\bar R}{\lambda}} r+\alpha,\lambda)$ is an $n$-dimensional complete expanding gradient Yamabe soliton with $R=\lambda.$

In particular, $(M^3,g,F,\lambda)=(\mathbb{R}\times \mathbb{H}^2, dr^2+\frac{\bar R}{\lambda} {\bar g}_{H}, \sqrt{\frac{\bar R}{\lambda}} r+\alpha,\lambda)$ is a 3-dimensional complete expanding gradient Yamabe soliton with $R=\lambda$, where $(\mathbb{H}^2,{\bar g}_H)$ is a hyperbolic surface.

\end{example}

Therefore, imposing conditions on the scalar curvature is crucial and fundamental for achieving rotational symmetry in complete expanding gradient Yamabe solitons.

The following lemma plays a fundamental role in the proof of our results later.

\begin{lemma}[\cite{Maeta21}]\label{equivlambda}
Let $(M,g,F,\lambda)$ be a nontrivial, complete, gradient Yamabe soliton such that 
$M=\mathbb{R}\times N^{n-1}$ and $g=dr^2+{{F'(r)}^2} \bar g$.
If $R\geq\lambda$ (resp. $R\leq\lambda$) on $M$, then $R>\lambda$ or $R\equiv\lambda$ (resp. $R<\lambda$ or $R\equiv\lambda$) on $M$.
\end{lemma}

 In particular, the problem reduces to the cases $R>\lambda$ or $R<\lambda$.
If $R>\lambda+\varepsilon$ or $R<\lambda-\varepsilon$, then these manifolds are rotationally symmetric.

\begin{proposition}\label{R>le}
Let $(M,g,F,\lambda)$ be a nontrivial, complete, gradient Yamabe soliton. If $R>\lambda+\varepsilon$ or $R<\lambda-\varepsilon$ for some $\varepsilon>0$, then it is rotationally symmetric and is given by the warped product 
\[
([0,+\infty),dr^2)\times_{|\nabla F|}(\mathbb{S}^{n-1},{\bar g}_{S}).
\]
\end{proposition}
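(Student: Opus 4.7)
The plan is to combine Tashiro's theorem with a short growth argument: the strict bound $R > \lambda + \varepsilon$ forces the derivative of the warping function to stay uniformly positive, which is incompatible with the cylindrical branch of Tashiro's trichotomy and therefore pushes us into the rotationally symmetric half-line case.

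Because the soliton is nontrivial and complete, and Hamilton's theorem rules out nontrivial compact Yamabe solitons, Theorem \ref{tashiro} applied with $\varphi = R - \lambda$ leaves only cases (2) and (3). In both cases I would work in the warped-product coordinates supplied by the theorem, where $F$ depends only on $r$ and $\rho := F'(r) > 0$ is the warping function. Restricting the soliton equation $\nabla\nabla F = (R-\lambda)g$ to $(\partial_r,\partial_r)$ and using that $\nabla_{\partial_r}\partial_r = 0$ in a warped product over an interval, I would obtain the identity
\[
\rho'(r) = R(r) - \lambda,
\]
whose right-hand side exceeds $\varepsilon$ everywhere by hypothesis. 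This is essentially equation \eqref{eq1} looked at pointwise through the soliton equation rather than through its differentiated form.

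The main step is to rule out case (3). There $r$ ranges over all of $\mathbb{R}$ and $\rho$ must be strictly positive throughout, since $|\nabla F|$ is genuinely the warping function of a bona fide warped product over a smooth base. Integrating $\rho' > \varepsilon$ from $0$ down to an arbitrary $r < 0$ yields $\rho(r) < \rho(0) + \varepsilon r$, which is strictly negative once $r < -\rho(0)/\varepsilon$, a contradiction. Hence only case (2) of Theorem \ref{tashiro} survives, and this is precisely the stated conclusion; the domain is $[0,\infty)$ and the cross section is $(\mathbb{S}^{n-1},\bar g_S)$.

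I do not anticipate a genuine obstacle here: the argument is the mirror image, on the negative side of $\mathbb{R}$, of the one-line computation used in Proposition \ref{R<le} (there $\rho' < -\varepsilon$ drives $\rho$ negative as $r$ grows). The only point requiring a moment of care is the claim that $\rho$ is strictly positive on all of $\mathbb{R}$ in case (3), but this is built into the warped-product description, since the warping function of a smooth warped product cannot vanish on the interior of the base interval.
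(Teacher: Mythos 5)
Your proposal is correct and follows essentially the same route as the paper: exclude the compact case via Hamilton's nonexistence result, then eliminate case (3) of Theorem \ref{tashiro} because $\rho'=R-\lambda>\varepsilon$ forces the everywhere-positive warping function $\rho$ to become negative upon integrating toward $r\to-\infty$. The paper states this in one line; you have merely filled in the integration detail explicitly.
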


\begin{proof}
Set $\rho(r)=F'(r)$. We only need to consider (2) of Theorem \ref{CSZ}. 
If $R>\lambda+\varepsilon$, then we have $\rho'>\varepsilon$ and $\rho>0$, we have a contradiction. 
If $R<\lambda-\varepsilon$, then one has that the positive smooth function $\rho$ satisfies $\rho'<-\varepsilon$. Therefore, we have a contradiction.
 \end{proof}
 
As a corollary, we obtain rotational symmetry of complete expanding gradient Yamabe solitons with nonnegative scalar curvature.

\begin{corollary}
Any nontrivial, complete, expanding gradient Yamabe soliton with nonnegative scalar curvature is rotationally symmetric.
\end{corollary}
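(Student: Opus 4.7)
The plan is to deduce the corollary directly from Proposition~\ref{R>le} by observing that the hypothesis ``positive scalar curvature, expanding'' automatically provides a uniform positive gap between $R$ and $\lambda$.

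First, recall the sign convention: by definition of an expanding soliton, the soliton constant satisfies $\lambda<0$, so $\lambda$ is a fixed negative real number. The hypothesis of positive scalar curvature means $R(x)>0$ for every $x\in M$. Combining these two facts pointwise gives
\[
R(x)-\lambda \;>\; -\lambda \;>\; 0 \qquad \text{for every } x\in M.
\]
In other words, setting $\varepsilon:=-\lambda/2>0$ (or any positive number strictly less than $-\lambda$), we obtain the uniform bound $R>\lambda+\varepsilon$ on all of $M$. The key point is that $\varepsilon$ does not depend on the point, because $\lambda$ is a constant, so the strict inequality $R>0$ is automatically a strict inequality with a fixed positive gap above $\lambda$.

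Once this gap is in hand, I would simply invoke Proposition~\ref{R>le}: the nontrivial complete expanding gradient Yamabe soliton $(M,g,F,\lambda)$ satisfies $R>\lambda+\varepsilon$ for the above $\varepsilon>0$, hence by that proposition it is rotationally symmetric and isometric to the warped product $([0,\infty),dr^2)\times_{|\nabla F|}(\mathbb{S}^{n-1},\bar g_S)$. There is no real obstacle here; the only thing worth double-checking is the convention that $\lambda<0$ strictly for expanding solitons (so that $-\lambda>0$ genuinely yields an admissible $\varepsilon$), which is built into the definition given in the introduction. Thus the corollary is an immediate consequence of Proposition~\ref{R>le}.
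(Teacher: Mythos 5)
Your proposal is correct and matches the paper's intended argument: the corollary is stated as an immediate consequence of Proposition~\ref{R>le}, using exactly the observation that $\lambda<0$ for an expanding soliton, so $R>0$ forces $R>\lambda+\varepsilon$ with, e.g., $\varepsilon=-\lambda/2>0$.
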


This generalizes Theorem 1.3 in \cite{DS13}, Corollary 3.3 in \cite{CMM12} and Corollary 1.6 in \cite{CSZ12}. In particular, we remove the assumption of local conformal flatness.

\section{Case $R\leq\lambda$}\label{sectionR<l}

We consider the case $R\leq\lambda$ and completely classify complete expanding gradient Yamabe solitons.

\begin{theorem}\label{R<l}
Let $(M^n,g,F,\lambda)$ be a nontrivial, complete, expanding gradient Yamabe soliton with $R\leq\lambda$. Then, $(M^n,g,F,\lambda)$ is either 
\begin{enumerate}
\item[$(1)$]
 rotationally symmetric and $([0,+\infty),dr^2)\times_{{F'(r)}^2}(\mathbb{S}^{n-1},{\bar g}_{S})$, or 
 \item[$(2)$] a product Riemannian manifold
$\left(
\mathbb{R}\times N^{n-1}, dr^2+\frac{\bar R}{\lambda}\bar g, \sqrt{\frac{\bar R}{\lambda}} r+\alpha
\right),
$
where $\bar R$ is a negative constant scalar curvature of $N$ and $\alpha$ is a constant.
\end{enumerate}
\end{theorem}

\begin{proof}
We consider (2) of Theorem \ref{CSZ}.
By Lemma \ref{equivlambda}, we only need to consider $R<\lambda$. By assumption, we have $\rho'<0$ on $\mathbb{R}$.
If $\rho''<0$ on $\mathbb{R}$, then $\rho$ is a positive concave function, which is a contradiction. 

Assume that there exists $\tilde r\in\mathbb{R}$ such that $\rho''(\tilde r)=0$.
By \eqref{key3}, 
\[
\rho'''=-\frac{\rho'(\rho'+\lambda)}{n-1}<0,
\] 
at $\tilde r$.
Hence, $\rho''<0$ on an interval $(\tilde r,\tilde r_1)$ for some $\tilde r_1$.
Iterating this argument, we see that $\rho'$ is weakly decreasing on $(\tilde r,+\infty)$.
Since $\rho'<0$, we have $\rho'<-\gamma^2$ for some constant $\gamma$ on $(\tilde r_2,+\infty)$ for some $\tilde r_2(>\tilde r)$. Since $\rho>0$, we have a contradiction.
Hence, $\rho''>0$ on $\mathbb{R}$. 
Set $s=-r$. Since $\dot\rho(s)=\frac{d}{ds}\rho(s)=-\rho'(r)>0$ and $\ddot\rho(s)>0$, $\rho\nearrow+\infty$ as $s\nearrow+\infty$. 
Thus, for a sufficiently large $\tilde s$, $-\lambda\rho^2+\bar R>0$ on $(\tilde s,+\infty)$.
By  \eqref{key2}, we have 
\[
2(n-1)\rho\ddot\rho+(n-1)(n-2)\dot\rho^2>\dot\rho\rho^2,
\]
on $(\tilde s,+\infty)$. 
Hence, we have the following differential inequality
\[
\frac{d}{ds}\left(
\rho^{\frac{n-2}{2}}\dot\rho
\right)
>\frac{1}{(n-1)(n+2)}\frac{d}{ds}\rho^{\frac{n+2}{2}},
\]
on $(\tilde s,+\infty)$.
Integrating both sides, we obtain
\[
\dot\rho>\frac{1}{(n-1)(n+2)}\rho^2+\tilde C\rho^{-\frac{n-2}{2}},
\]
on $(\tilde s,+\infty)$, where $\tilde C$ is a constant.
Since $\rho$ is sufficiently large on $(\tilde s,+\infty)$, we have 
\[
\rho^{-2}\dot\rho>\frac{1}{2(n-1)(n+2)},
\]
on $(\tilde s,+\infty)$. For $\tilde s_1>\tilde s$, by integrating both sides from $\tilde s_1$ to $s$, we obtain
\[
-(\rho^{-1}(s)-\rho^{-1}(\tilde s_1))>\frac{1}{2(n-1)(n+2)}(s-\tilde s_1).
\]
By taking $s\nearrow+\infty$, we have a contradiction.
\end{proof}

\begin{remark}
By the same argument, we can classify steady solitons:

Let $(M^n,g,F)$ be a nontrivial, complete, steady gradient Yamabe soliton with nonpositive scalar curvature. Then, $(M^n,g,F)$ is either 
\begin{enumerate}
\item[$(1)$]
 rotationally symmetric and $([0,+\infty),dr^2)\times_{{F'(r)}^2}(\mathbb{S}^{n-1},{\bar g}_{S})$, or 
 \item[$(2)$] a product Riemannian manifold
$\left(
\mathbb{R}\times N^{n-1}(0), dr^2+a^2\bar g, ar+b
\right),
$
for some $a(\neq0),b\in\mathbb{R}$, where $N^{n-1}(0)$ has zero scalar curvature.
\end{enumerate}

\end{remark}


\section{Case $R\geq\lambda$}\label{sectionR>l}
 
In this section, we consider the case $R\geq\lambda$. 

\begin{theorem}\label{R>l}
Let $(M^n,g,F,\lambda)$ be a nontrivial, complete, expanding gradient Yamabe soliton such that 
$M=\mathbb{R}\times N^{n-1}$ and $g=dr^2+{{F'(r)}^2} \bar g$. 
If $R\geq\lambda$, then $(M,g,F)$ is either
\begin{enumerate}
\item[$(1)$]
a product Riemannian manifold
$\left(
\mathbb{R}\times N^{n-1}, dr^2+\frac{\bar R}{\lambda}\bar g, \sqrt{\frac{\bar R}{\lambda}} r+\alpha
\right), 
$
where $\bar R$ is a negative constant scalar curvature of $N$ and $\alpha$ is a constant, or
\item[$(2)$]
a warped product manifold
\[
(\mathbb{R},dr^2)\times_{|\nabla F|}(N^{n-1},\bar g),
\]
where $N$ has constant scalar curvature $\bar R$. 
Furthermore, $\bar R\leq 0$ and $\lambda<R<0$. 
Moreover, the potential function $F$ satisfies $\displaystyle\lim_{r\rightarrow-\infty}F'(r)=\sqrt{\frac{\bar R}{\lambda}}$, $F''(r)>0$, and $F'''(r)\geq0$ on $\mathbb{R}$. However, there is no open interval on which $F'''(r)=0$. 
\end{enumerate}
\end{theorem}

As a corollary, we can classify expanding gradient Yamabe solitons with $R\geq\lambda.$
\begin{corollary}
A nontrivial, complete, expanding gradient Yamabe soliton $(M^n,g,F,\lambda)$ with $R\geq\lambda$ is one of the following:

\begin{enumerate}
\item[$(1)$]
a product Riemannian manifold
$\left(
\mathbb{R}\times N^{n-1}, dr^2+\frac{\bar R}{\lambda}\bar g, \sqrt{\frac{\bar R}{\lambda}} r+\alpha
\right), 
$
where $\bar R$ is a negative constant scalar curvature of $N$ and $\alpha$ is a constant, or
\item[$(2)$]
a manifold with rotational symmetry 
\[
([0,+\infty),dr^2)\times_{|\nabla F|}(\mathbb{S}^{n-1},\bar g_S),
\]
where $\bar g_S$ is the round metric on $\mathbb{S}^{n-1}$, or
\item[$(3)$]
a warped product manifold
\[
(\mathbb{R},dr^2)\times_{|\nabla F|}(N^{n-1}(0),\bar g),
\]
which satisfies $\lambda<R<0$, where $(N^{n-1}(0),\bar g)$ has zero scalar curvature, or
\item[$(4)$]
a warped product manifold
\[
(\mathbb{R},dr^2)\times_{|\nabla F|}(N^{n-1}(-c^2),\bar g),
\]
which satisfies $\lambda<R<0$, where $(N^{n-1}(-c^2),\bar g)$ has negative constant scalar curvature.
\end{enumerate}

Furthermore, in cases $(3)$ and $(4)$, the potential function $F$ satisfies $\displaystyle\lim_{r\rightarrow-\infty}F'(r)=\sqrt{\frac{\bar R}{\lambda}}$, $F''(r)>0$ and $F'''(r)\geq0$ on $\mathbb{R}$. However, there is no open interval on which $F'''(r)=0$. 
 \end{corollary}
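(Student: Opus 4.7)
The corollary is essentially a repackaging of Tashiro's trichotomy (Theorem~\ref{tashiro}) together with Theorem~\ref{R>l}. I would first invoke Theorem~\ref{tashiro}, which produces three possibilities for $(M^n,g,F,\lambda)$: the compact rotationally symmetric case, the warped product $([0,\infty),dr^2)\times_{|\nabla F|}(\mathbb{S}^{n-1},\bar g_S)$, and the warped product $(\mathbb{R},dr^2)\times_{|\nabla F|}(N^{n-1},\bar g)$. The compact case is immediately excluded by Hamilton's theorem that there exist no nontrivial compact gradient Yamabe solitons, leaving only the two noncompact warped product forms.

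If the soliton has the second structure, there is nothing more to do: this is case $(1)$ of the corollary. If instead it has the third structure, then the warping function $|\nabla F|$ is strictly positive on all of $\mathbb{R}$, so $F$ has no critical points. This is precisely the hypothesis of Theorem~\ref{R>l}, which I would apply directly to conclude $\bar R \leq 0$, $\lambda < R < 0$, $F''(r) > 0$, $F'''(r) \geq 0$, $\lim_{r\to-\infty} F'(r) = \sqrt{\bar R/\lambda}$, and that $F'''$ vanishes on no open interval. These are exactly the asymptotic and monotonicity properties asserted at the end of the corollary.

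It remains to split Tashiro's third case according to the sign of $\bar R$: if $\bar R = 0$ the fiber has flat scalar curvature, giving case $(2)$ of the corollary, while if $\bar R < 0$ one writes $\bar R = -c^2$ for some $c > 0$ and obtains case $(3)$. The main analytic work has already been absorbed into Theorem~\ref{R>l}, so the only remaining point requiring attention is the dichotomy ``critical point of $F$ vs. no critical point,'' which matches cases (2) and (3) of Tashiro's theorem respectively. This dichotomy is immediate from the defining warped product structures, since in Tashiro case (3) the warping function $|\nabla F|$ is positive everywhere, whereas in case (2) it vanishes at $r=0$. Consequently I do not foresee any substantive obstacle beyond carefully tracking cases.
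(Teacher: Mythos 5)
Your proposal is correct and follows exactly the route the paper intends: the corollary is deduced by combining Tashiro's trichotomy (with the compact case excluded by Hamilton's theorem and Tashiro case (2) giving item (1)) with Theorem~\ref{R>l} applied to Tashiro case (3), where the absence of critical points of $F$ is immediate from the positivity of the warping function, and the split into items (2) and (3) is just the sign dichotomy $\bar R=0$ versus $\bar R<0$ guaranteed by $\bar R\leq 0$. The paper leaves this derivation implicit, and your write-up supplies precisely the missing bookkeeping with no gaps.
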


\begin{proof}[Proof of Theorem $\ref{R>l}$]
By Lemma \ref{equivlambda}, we consider $R>\lambda$.
First, we show that the scalar curvature $\bar R$ of the manifold $N$ appearing in Case (2) of Theorem \ref{CSZ} is nonpositive.
\begin{claim}\label{claim1}
The scalar curvature $\bar R$ of $N$ in Case (2) of Theorem \ref{CSZ} is nonpositive.
\end{claim}
\begin{proof}[Proof of Claim \ref{claim1}]
Assume that $\bar R$ is positive. 
Set $s=-r$. Since $\dot\rho(s)=-\rho'(r)(<0)$, $\ddot\rho(s)=\rho''(r)$, and $\dddot\rho(s)=-\rho'''(r)$, the equations $\eqref{key2}$ and $\eqref{key3}$ are as follows.
\begin{equation}\label{eq12}
2(n-1)\rho\ddot\rho+(n-1)(n-2)\dot\rho^2+\rho^2(-\dot\rho+\lambda)=\bar R,
\end{equation}
and
\begin{equation*}
-2(n-1)^2\dot\rho\ddot\rho-2(n-1)\rho\dddot \rho-2\rho\dot\rho(-\dot\rho+\lambda)+\rho^2\ddot\rho=0.
\end{equation*}

If $\ddot\rho<0$ on $\mathbb{R}$, then we have a contradiction, because $\rho>0$.

Assume that there exists $\tilde s\in\mathbb{R}$ such that $\ddot\rho(\tilde s)=0$. 
Then one has 
\[
\dddot\rho(\tilde s)=-\frac{\dot\rho(\tilde s)(-\dot\rho(\tilde s)+\lambda)}{n-1}.
\]
We have three cases.

If $-\dot\rho(\tilde s)+\lambda>0$, then $\dddot\rho(\tilde s)>0$.
By \eqref{eq12}, we have
\[
\dot\rho=\frac{\rho^2\pm\sqrt{\rho^4-4(n-1)(n-2)(\lambda\rho^2-\bar R)}}{2(n-1)(n-2)},
\]
at $\tilde s$.
Since $\dot\rho<0$, one has
\begin{align*}
-2(n-1)(n-2)\lambda+\rho^2
<-2(n-1)(n-2)\dot\rho+\rho^2
=\sqrt{\rho^4-4(n-1)(n-2)(\lambda\rho^2-\bar R)},
\end{align*}
at $\tilde s$.
Since both sides of the above inequality are positive, one has 
\[
(n-1)(n-2)\lambda^2<\bar R.
\]

By the same argument, if $-\dot\rho(\tilde s)+\lambda<0$, that is, $\dddot\rho(\tilde s)<0$, then one has 
\[
(n-1)(n-2)\lambda^2>\bar R,
\]
and 
if $-\dot\rho(\tilde s)+\lambda=0$, that is, $\dddot\rho(\tilde s)=0$, then one has 
\[
(n-1)(n-2)\lambda^2=\bar R.
\]
Therefore, if there exists a point $\tilde s$ such that $\ddot\rho=0$, $\dddot\rho$ has the same behavior. 

Case 1. If $-\dot\rho+\lambda>0$ at the point $\tilde s$, then $\dddot\rho(\tilde s)>0$ and $\dot\rho$ is weakly increasing on $(\tilde s,+\infty)$. 
One has $\rho\searrow c(\geq0)$ and $\dot\rho\nearrow 0$ as $s\nearrow+\infty$. 

If $c>0$, then by \eqref{eq12}, 
\[
\ddot\rho\rightarrow\frac{\bar R-c^2\lambda}{2(n-1)c}(>0),
\]
as $s\nearrow+\infty$. Since $\dot\rho<0$, we have a contradiction.

If $c=0$, then by \eqref{eq12} again, $\ddot\rho\nearrow+\infty$ as $s\nearrow +\infty$.
Since $\dot\rho<0$, we have a contradiction.

Case 2. If $-\dot\rho+\lambda<0$ at the point $\tilde s$, then $\dddot\rho(\tilde s)<0$ and $\dot\rho$ is weakly decreasing on $(\tilde s,+\infty)$. Hence, one has $\dot\rho<-\gamma^2$ for some constant $\gamma$ on $(\tilde s_2,+\infty)$ for some $\tilde s_2$. Since $\rho>0$, we have a contradiction.

Case 3. If $-\dot\rho+\lambda=0$ at the point $\tilde s$, then $(n-1)(n-2)\lambda^2=\bar R$. By \eqref{eq12}, we have 
\begin{equation}\label{claimdd}
2(n-1)\rho\ddot\rho
=(-\dot\rho+\lambda)
\{
-\rho^2+(n-1)(n-2)(\lambda+\dot\rho)
\}.
\end{equation}
Hence, the points satisfying $\ddot{\rho}=0$ coincide with those satisfying $\dot{\rho}=\lambda$. Therefore, we only need to consider the following three cases.

Case 1. $\dot\rho>\lambda$ on $(\tilde s_3,+\infty)$ for some $\tilde s_3(>\tilde s)$. 
In this case, $\ddot\rho>0$ on $(\tilde s_3,+\infty)$. Hence, by \eqref{claimdd} again, an elementary argument shows that $\ddot\rho>\gamma(>0)$ for some constant $\gamma$ on $(\tilde s_4,+\infty)$ for sufficiently large $\tilde s_4$.
Since $\dot\rho<0$ on $\mathbb{R}$, we have a contradiction.

Case 2. $\dot\rho<\lambda$ on $(\tilde s_3,+\infty)$ for some $\tilde s_3$.
In this case, we have $\ddot\rho<0$ on $(\tilde s_3,+\infty)$.
Since $\rho>0$ on $\mathbb{R}$, we have a contradiction.

Case 3. $\dot\rho=\lambda$ on $(\tilde s_3,+\infty)$ for some $\tilde s_3$.
Since $\rho>0$ on $\mathbb{R}$, we have a contradiction.

Therefore, there is no such point $\tilde s$. Hence, one has $\ddot\rho>0$ on $\mathbb{R}$. 
We have $\rho\searrow c(\geq0)$ for some $c\in\mathbb{R}$ and $\dot\rho\nearrow0$ as $s\nearrow+\infty$.

Case 1: $c>0$. By \eqref{eq12}, we have
\[
\ddot\rho\rightarrow \frac{\bar R-\lambda c^2}{2(n-1)c}(>0),
\]
as $s\nearrow+\infty$. Since $\dot\rho<0$, we have a contradiction.

Case 2: $c=0$. By \eqref{eq12}, we have $\ddot\rho\rightarrow+\infty$ as $s\nearrow+\infty$. Since $\dot\rho<0$, we have a contradiction.
\end{proof}

Therefore, one has $\bar R\leq0.$
Under this assumption, the following claim holds.
\begin{claim}\label{claim2}
If there exists a point $r_0\in\mathbb{R}$ such that $\rho''(r_0)=0$, then $\rho'''(r_0)>0$.
Furthermore, we also have $\sqrt{\frac{\bar R}{\lambda}}<\rho(r_0).$ 
\end{claim}

\begin{proof}[Proof of Claim \ref{claim2}]~\\
\noindent
Case 1. $\bar R=0.$ 
Assume that $\rho''(r_0)=0$. By $\eqref{key2}$, 
\[
(n-1)(n-2)\rho'^2(r_0)+\rho^2(r_0)\rho'(r_0)=-\lambda\rho^2(r_0).
\]
Hence, one has
\[
\rho'(r_0)=\frac{-\rho^2(r_0)\pm\sqrt{\rho^4(r_0)-4(n-1)(n-2)\lambda\rho^2(r_0)}}{2(n-1)(n-2)}.
\]
Since $\rho'>0$, it must be
\begin{equation}\label{rp0}
\rho'(r_0)=\frac{-\rho^2(r_0)+\sqrt{\rho^4(r_0)-4(n-1)(n-2)\lambda\rho^2(r_0)}}{2(n-1)(n-2)}.
\end{equation}
Furthermore, by $\eqref{key3}$, 
\[
\rho'''(r_0)=-\frac{1}{n-1}\rho'(r_0)(\rho'(r_0)+\lambda).
\]

Assume that $-\lambda<\rho'(r_0)$. 
Combining this inequality with \eqref{rp0}, we have
\[
-\lambda
<
\frac{-\rho^2(r_0)+\sqrt{\rho^4(r_0)-4(n-1)(n-2)\lambda\rho^2(r_0))}}{2(n-1)(n-2)}.
\]
A straightforward computation leads to a contradiction.

Assume that $-\lambda=\rho'(r_0)$. 
A similar argument shows that 
$\lambda=0$, which is a contradiction.

Hence, the inequality $-\lambda>\rho'(r_0)$ holds and one has
$\rho'''(r_0)>0.$
\\

\noindent
Case 2. $\bar R<0$. 
Assume that $\rho''(r_0)=0$. By $\eqref{key2}$, 
\[
(n-1)(n-2)\rho'^2(r_0)+\rho^2(r_0)\rho'(r_0)=\bar R-\lambda\rho^2(r_0).
\]
Hence, one has
\[
\rho^4(r_0)-4\lambda(n-1)(n-2)\rho^2(r_0)+4(n-1)(n-2)\bar R>0,
\]
and
\[
\rho'(r_0)=\frac{-\rho^2(r_0)\pm\sqrt{\rho^4(r_0)+4(n-1)(n-2)(\bar R-\lambda\rho^2(r_0))}}{2(n-1)(n-2)}.
\]
Since $\rho'>0$, one has 
\[
\sqrt{\frac{\bar R}{\lambda}}<\rho(r_0),
\]
and
\[
\rho'(r_0)=\frac{-\rho^2(r_0)+\sqrt{\rho^4(r_0)+4(n-1)(n-2)(\bar R-\lambda\rho^2(r_0))}}{2(n-1)(n-2)}.
\]
Furthermore, by $\eqref{key3}$, 
\[
\rho'''(r_0)=-\frac{1}{n-1}\rho'(r_0)(\rho'(r_0)+\lambda).
\]
Since $\rho'>0$, a similar argument shows that $\rho'''(r_0)>0.$
\end{proof}

Assume that there exists $r_1\in\mathbb{R}$ such that $\rho'(r_1)=-\lambda.$ By $\eqref{key2}$, we have
$\rho''(r_1)<0.$ 
Hence, there exists no point $r_{-1}\in(-\infty,r_1)$ such that $\rho''(r_{-1})=0$.
In fact, if there exists such a point, then by Claim \ref{claim2}, $\rho'''(r_{-1})>0$ and the function $\rho'$ is weakly increasing on $(r_{-1},+\infty)$. For the same reason, it cannot occur that $\rho''>0$ 
at some point 
on $(-\infty,r_1)$. Therefore, we have
 $\rho''<0$ on $(-\infty,r_2)$ for some $r_2(>r_1)$. However, since $\rho>0,\rho'>0$, and $\rho''<0$ on $(-\infty, r_2)$, we have a contradiction. Hence, such a point $r_1$ does not exist, and one has either $\rho'>-\lambda$ or $\rho'<-\lambda$ on $\mathbb{R}$. By Proposition \ref{R>le}, the first case cannot occur.
Therefore, $\lambda<R<0.$ 

Claim \ref{claim2} shows that for each critical point $r_0$ of $\rho'$, the function $\rho'$ is weakly increasing on $(r_0,+\infty)$. Assume that $\rho'$ has a critical point $r_0$. Furthermore, if there exists an interval on which $\rho'$ is decreasing, then without loss of generality, we may assume that it is $(-\infty,r_0)$. However, by the same argument as before, we have a contradiction.
Therefore, $\rho'$ is weakly increasing on $\mathbb{R}$. 
If there exists no critical point of $\rho'$, we have $\rho''>0$ or $\rho''<0$ on $\mathbb{R}$. However, $\rho''<0$ cannot occur. Hence, one has $\rho''>0$.

Therefore, $\rho''\geq0$ on $\mathbb{R}$ and there exists no open interval on which $\rho''=0$. 
Set $s=-r$ as in the proof of Claim \ref{claim1}. 
Since $\rho>0$ and $\dot\rho<0$, we have $\rho\searrow c(\geq0)$ for some $c\in\mathbb{R}$, and $\dot\rho\nearrow0$ as $s\nearrow +\infty$. 

Case 1: $c>0$. By \eqref{eq12}, $\ddot\rho$ must converge to some constant $C(\geq0)$. If $C>0$, since $\dot\rho<0$, we have a contradiction. If $C=0$, we have $\lambda c^2=\bar R$.

Case 2: $c=0$. By \eqref{eq12} again, since $\ddot\rho\geq0$, $\bar R$ must be zero. 

Therefore, we have 
\[
\bar R=\lambda\lim_{r\rightarrow-\infty}\rho^2(r),
\]
and $F'(r)>\sqrt{\frac{\bar R}{\lambda}}$ on $\mathbb{R}$.
\end{proof}







\bibliographystyle{amsbook}

\end{document}